\theoremstyle{plain}
\newtheorem{thm}{Theorem}[section]
\newtheorem{prop}[thm]{Proposition}
\theoremstyle{definition}
\newtheorem{definition}[thm]{Definition}
\numberwithin{equation}{section}
\newcommand{\ga}[2]{\begin{gather}\label{#1}#2 \end{gather}}
\newcommand{\sA}{{\mathcal A}}
\newcommand{\sB}{{\mathcal B}}
\newcommand{\sC}{{\mathcal C}}
\newcommand{\sI}{{\mathcal I}}
\newcommand{\sO}{{\mathcal O}}
\newcommand{\sS}{{\mathcal S}}
\newcommand{\sT}{{\mathcal T}}
\newcommand{\sU}{{\mathcal U}}
\newcommand{\sV}{{\mathcal V}}
\newcommand{\sW}{{\mathcal W}}
\newcommand{\F}{{\mathbb F}}
\newcommand{\Q}{{\mathbb Q}}
\newcommand{\Z}{{\mathbb Z}}
\title[Deligne's finiteness ]{A remark on Deligne's finiteness theorem}
\author{H\'el\`ene Esnault } 
\address{Freie Universit\"at Berlin, Arnimallee 3, 14195, Berlin,  Germany}
\email{esnault@math.fu-berlin.de}
\thanks{The  author is supported by  the Einstein program. }
\date{\today}
\begin{document}

\begin{abstract}  Over a connected geometrically unibranch scheme  $X$ of finite type over a finite field, we show,  with  purely geometric arguments, finiteness of the number of  irreducible  $\bar \Q_\ell$-lisse sheaves, with  bounded rank and bounded ramification in the sense of Drinfeld,  up to twist by a character of the finite field.  On $X$ smooth, with bounded ramification in the sense of \cite[Defn.3.6]{EK12}, this is Deligne's theorem~\cite[Thm.1.1]{EK12}, the proof of which uses the whole strength of \cite{Del12} and \cite{Dri12}.  We also generalise Deligne's theorem~\cite[Thm.1.1]{EK12} from $X$ smooth to $X$ normal,  using  Deligne's theorem (not reproving it) and  a few more geometric arguments.
\end{abstract}
\maketitle

\section{Introduction}\label{intro}
Let $X$ be a normal geometrically connected scheme of finite type defined over a finite field $\F_q$, let $\sV$ be an irreducible $\bar \Q_\ell$-lisse sheaf with finite determinant, where $\ell \neq p={\rm char}(\F_q)$.  In Weil II  \cite[conj.1.2.10]{Del80}, Deligne conjectured the following. 
\begin{enumerate}
\item[(i)] $\sV$ has weight $0$.
\item[(ii)] There is a number field $E(\sV)\subset \bar \Q_\ell$ such that for all closed points $x\in X$, the characteristic polynomial $f_{\sV}(x)( t)={\rm det}(1-F_xt, \sV)\in E(\sV)[t]$, where $F_x$ is the geometric Frobenius of $x$.
\item[(iii)] For any $\ell' \neq p$ and any embedding $\sigma: E(\sV)\hookrightarrow \bar \Q_{\ell'}$, for any closed point $x$ of $X$, all eigenvalues of $F_x$  are $\ell'$-adic units.
\item[(iv)] For any $\sigma$ as in (iii), there is an irreducible   $\bar \Q_{\ell'}$-lisse  sheaf $\sV_{\sigma}$, called the companion to $\sigma$,  such that $\sigma f_{\sV}(x)=f_{\sV_\sigma}(x)$.
\end{enumerate}
\medskip

Deligne's conjectures have been proved by Lafforgue \cite[Thm.VII.6]{Laf02} when $X$ is a smooth curve. 

\medskip

From this one deduces (i)  and (iii) in higher dimension as follows. For any closed point $x$, one finds a smooth curve $C$ mapping to $X$ such that $x\to X$ lifts to $x\to C$, such that the restriction of $\sV$ to $C$ remains irreducible. To this aim,  Lafforgue in \cite[Prop.VII.7]{Laf02} uses Bertini's theorem, unfortunately  in a wrong way.  It has been corrected in \cite[Thm.2.15]{Dri12} using the Hilbert  irreducibility theorem (see  also \cite[ App.B ]{EK12}) and in  \cite[1.5-1.9]{Del12}  using the  Bertini theorem.  One first shows a group theoretic lemma saying that there is a finite quotient of the monodromy group of $\sV$ such that if the monodromy group of the restriction of $\sV$ to a subscheme still has this finite group as a quotient, then  it has the same monodromy group as on $X$. The construction of a curve 'doing' this is then performed by Bertini or Hilbert's irreducibility. 

\medskip

  Deligne proved (ii) in 2007 (\cite[Thm.3.1]{Del12}). Drinfeld, using (ii), proved (iv) in 2011 (\cite[Thm.1.1]{Dri12}), assuming in addition $X$ to be smooth.

\medskip

Let $X$ be a  geometrically connected scheme of finite type over a characteristic $p>0$ field $k$,  $\beta: Y\to X$ be a proper dominant morphism.  One says that a $\bar \Q_\ell$-lisse sheaf  $\sV$ has ramification bounded by $\beta$ if $\beta^*( \sV)$ is tame (see \cite[Thm.2.5]{Dri12}). 
This means that for any smooth curve $C$ mapping to $Y$, the pullback of $\sV$ to $C$ is tame in the usual sense (\cite[Defn.2.1.2,p.30]{GM71}). 
If $X$ is geometrically unibranch,  $\sV$ is defined by a representation $\rho: \pi_1(X)\to GL(r, R)$ of the fundamental group 
$\pi_1(X)$  defined in \cite[V]{SGA1} (see \cite[Intro., Lem.7.4.10]{BS15}, we omitted the base point here), where $R\supset \Z_\ell$ is a finite extension of discrete valuation rings.  Then $\sV$ has ramification bounded by   any finite \'etale Galois cover $\beta$ such that $\pi_1(Y)\hookrightarrow \pi_1(X)$ is an open normal pro-$\ell$-subgroup, for example $\pi_1(Y)={\rm Ker} \big(\pi_1(X)\to GL(r, R)\to GL(r, R/2\ell)\big)$. We say such an  $\beta$  makes $\sV$ tame. 
If $Y$ is smooth, the tame fundamental group  $\pi^t_1(Y)$ is well defined   as a quotient of $\pi_1(Y)$ (see \cite[Thm.1.1]{KS10}) and the property that $\beta^*(\sV)$ is tame is  equivalent to 
the property that the underlying representation of $\pi_1(Y)$ factors through $\pi^t_1(Y)$.

\begin{definition} \label{defn:alpha}
  Given a natural number $r$ and  given $\beta$,  one defines $\sS(X, r, \beta)$ to be  the set of rank $r$ isomorphism classes of irreducible $\bar \Q_\ell$-lisse sheaves, with ramification bounded by $\beta$,   modulo twist by a character of the Galois group of $k$.
\end{definition}

\medskip

Let $X$ be a  normal  scheme of finite type over  a characteristic $p>0$ field $k$,  $X\hookrightarrow \bar X$ be a normal compactification, and $D$ be an effective Cartier divisor with support $\bar X\setminus X$. One says that a $\bar \Q_\ell$-lisse sheaf  $\sV$ has ramification bounded by $D$ if for any smooth curve $C$ mapping to $X$, 
with compactification $\bar C\to \bar X$, where $\bar C$ is smooth, 
the pullback $\sV_C$ of $\sV$ to $C$ has Swan conductor bounded above by $\bar C\times_{\bar X} D$ (\cite[Defn.3.6]{EK12}). 

\begin{definition} \label{defn:D}
 Given a natural number $r$ and  given $D$,   one defines $\sS(X, r, D)$ to be  the set of isomorphism classes of rank $r$ irreducible $\bar \Q_\ell$-sheaves, with ramification bounded by $D$,  modulo twist by a character of the Galois group of $k$.

\end{definition}

Recall that if $k$ is a finite field,  by class field theory \cite[Thm.~1.3.1]{Del80}, any class in $\sS(X,r, \alpha)$ (Definition~\ref{defn:alpha})  or 
$\sS(X,r, D)$ (Definition~\ref{defn:D})  contains a $\sV$ with finite order determinant. 

\medskip

If $\sV$ has ramification bounded by a finite \'etale $\beta$, then  it also has ramification bounded by $\Delta$, where $\Delta$ is the discriminant of $\beta$ times the rank of $\sV$ (\cite[Prop.3.9]{EK12}), that is 
\ga{1}{ \sS(X,r,\beta) \subset \sS(X,r, \Delta).}   Given an effective Cartier divisor $D$ with support $\bar X\setminus X$,  we do not know whether there is a $\beta: X'\to X$  finite \'etale such that $\sS(X,r, D) \subset \sS(X,r, \beta)$. See Section~\ref{ss:ram}.

\medskip
In 2011, Deligne proved the following finiteness theorem (\cite[Thm.1.1]{EK12}).
\begin{thm} \label{thm:deligne}
Let $X$ be  a geometrically connected smooth scheme of finite type defined over a finite field $k$, $X\hookrightarrow \bar X$ be a normal compactification, $D$ be a Cartier divisor with support $\bar X\setminus X$, $r$ be a natural number. Then  $\sS(X,r,D)$ is finite.

\end{thm}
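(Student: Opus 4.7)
My plan is to reduce the finiteness on $X$ to finiteness on smooth curves, where it is known through Lafforgue's proof of the Langlands correspondence, and then to recover finiteness on $X$ by combining Deligne's number field theorem with Drinfeld's companions. By the class field theory remark recalled after Definition~\ref{defn:D}, I may work with $\sV \in \sS(X,r,D)$ having finite-order determinant; in particular $\sV$ is pure of weight $0$, so the Frobenius eigenvalues at every closed point $x$ of degree $d(x)$ have archimedean size $q^{d(x)/2}$, and the coefficients of $f_\sV(x)(t)$ are algebraic integers of archimedean size bounded solely in terms of $r$ and $d(x)$.

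\textbf{Curve reduction.} For each such $\sV$, I would apply the Bertini/Hilbert-irreducibility construction of \cite[1.5--1.9]{Del12} and \cite[Thm.~2.15]{Dri12} to produce a smooth curve $j\colon C \hookrightarrow X$ with smooth compactification $\bar C \to \bar X$ on which $j^*\sV$ remains irreducible. The ramification of $j^*\sV$ is then bounded by the Cartier divisor $\bar C \times_{\bar X} D$, so $j^*\sV \in \sS(C, r, \bar C \times_{\bar X} D)$. On a curve, finiteness of this set modulo twist follows from Lafforgue \cite[Thm.~VII.6]{Laf02}. Combined with Deligne's theorem \cite[Thm.~3.1]{Del12}, every characteristic polynomial $f_\sV(x)(t)$ lies in a number field $E(\sV)$, and its coefficients are algebraic integers of height controlled by $r$ and $d(x)$.

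\textbf{Passage from curves to $X$, and main obstacle.} The difficult step is climbing back to $X$: a single $j^*\sV$ does not determine $\sV$. My plan is to exploit Drinfeld's companions \cite[Thm.~1.1]{Dri12}. For every embedding $\sigma\colon E(\sV) \hookrightarrow \bar\Q_{\ell'}$, a companion $\sV_\sigma$ exists on $X$; companionship preserves tameness on any finite étale cover along which $\sV$ becomes tame, hence preserves the ramification bound by $D$, so $\sV_\sigma \in \sS(X,r,D)$. Thus the full Galois orbit of $\sV$ sits inside $\sS(X,r,D)$, and combining with the curve finiteness of the preceding step yields a uniform bound on $[E(\sV):\Q]$. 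With $[E(\sV):\Q]$ bounded, each $f_\sV(x)(t)$ takes only finitely many values at each $x$. The remaining task is to select a finite set of closed points whose Frobenii generate a Zariski-dense subgroup of the monodromy of every $\sV$ in the class, so that by Chebotarev density (\cite[Thm.~1.3.1]{Del80}) the tuple of characteristic polynomials at these points determines $\sV$ modulo semisimplification, and hence, by irreducibility, modulo twist. Producing such a uniform ``separating set of points'' across all $\sV \in \sS(X,r,D)$ is the principal obstacle, and it is precisely here that the full strength of both \cite{Del12} and \cite{Dri12} seems unavoidable.
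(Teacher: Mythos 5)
Your proposal is not a proof: at the decisive moment you write that producing a uniform separating set of closed points is ``the principal obstacle,'' and you leave it unresolved. That obstacle is precisely the content of Deligne's argument, so what you have is a list of the known ingredients (Lafforgue on curves, the number field of \cite{Del12}, the companions of \cite{Dri12}) followed by an acknowledgement that you cannot assemble them. Note also that the paper itself does not reprove Theorem~\ref{thm:deligne}: it quotes it from \cite{EK12} and uses it as a black box (e.g.\ in the proof of Theorem~\ref{thm:deligne2}); the theorem the paper proves from scratch, Theorem~\ref{thm:main}, is attacked by a genuinely different, purely geometric route --- restriction to a fixed alteration $Y$ and a single curve $C \subset Y$ furnished by the tame Lefschetz theorem \cite[Thm.1.1 (a)]{EKin15}, which works for \emph{all} tame sheaves of bounded rank at once and thereby avoids companions and number fields entirely. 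The uniformity your sketch is missing is exactly what that Lefschetz theorem provides in the tame setting; whether an analogue exists for ramification bounded by a divisor $D$ is recorded in Section~\ref{ss:ram} as an open question of Deligne, so your missing step cannot be dismissed as routine.

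There is, moreover, a circularity in the one step you do sketch toward the obstacle: you claim that companions combined with curve finiteness ``yield a uniform bound on $[E(\sV):\Q]$.'' The companions $\sV_\sigma$ are pairwise non-isomorphic elements of $\sS(X,r,D)$, so bounding their number by the size of that set is circular --- that set is the one whose finiteness is at stake. If instead you restrict them to your curve $C$, distinct embeddings $\sigma$ may coincide on the subfield of $E(\sV)$ generated by the coefficients of $f_\sV(x)$ for $x \in |C|$, so the map $\sigma \mapsto \sV_\sigma|_C$ need not be injective, and Lafforgue's finiteness on $C$ only bounds the degree of that subfield. Knowing that this subfield equals $E(\sV)$, uniformly in $\sV$, is essentially the content of \cite[Thm.3.1]{Del12} together with the heart of the argument in \cite{EK12}, not something obtained for free. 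Two smaller slips: purity of weight $0$ gives Frobenius eigenvalues of archimedean absolute value $1$, not $q^{d(x)/2}$; and the coefficients of $f_\sV(x)$ are not algebraic integers in general --- they are integral away from $p$, with $p$-adic denominators bounded via Lafforgue --- a point that matters when you want bounded degree to force finitely many possible characteristic polynomials.
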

Of course in dimension $1$, the theorem is a consequence of Lafforgue's main theorem \cite[Prop.VII.7]{Laf02}.
Deligne's proof uses the full strength of  Drinfeld's theorem on the existence of the companions, which thus forces $X$ to be smooth. This in turn uses the full strength of Deligne's theorem on the existence of the number field.  One notices that 
abstractly, the finiteness theorem together with the existence of the companions implies the existence of the number field (see \cite[ Cor.8.3]{EK12}).

\medskip

In this short note we prove the following
\begin{thm} \label{thm:main}
Let $X$ be a geometrically connected, geometrically unibranch   scheme of finite type defined over a finite field, $\alpha: X'\to X$  be a finite \'etale cover, $r$ be a natural number. Then $\sS(X,r,\alpha)$ is finite.

 \end{thm}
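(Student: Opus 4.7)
My plan is to pull back via a smooth alteration and invoke Theorem~\ref{thm:deligne}. Enlarging $\alpha$ to a finite \'etale Galois cover factoring through it only enlarges $\sS(X,r,\alpha)$, so we may assume $\alpha:X'\to X$ is Galois. By de Jong's alteration theorem, applied first to a compactification of $X'$ and then again along the boundary to put it in strict normal crossings position, there is a proper surjective generically \'etale morphism $\pi:Y\to X'$ with $Y$ geometrically connected and smooth quasi-projective over a finite extension $k'\supset k$, together with a smooth compactification $Y\hookrightarrow \bar Y$ whose boundary $E=\bar Y\setminus Y$ is strict normal crossings. Set $f=\alpha\circ\pi:Y\to X$. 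Since $\pi$ is proper dominant and generically finite and $X'$ is geometrically unibranch, Stein factorization of $\pi$ combined with the open immersion $\pi_1(X')\subset \pi_1(X)$ of finite index $\deg\alpha$ shows that $H:=\mathrm{im}\bigl(f_*:\pi_1(Y)\to\pi_1(X)\bigr)$ is an open subgroup of finite index, and that $\pi_1(Y)\twoheadrightarrow H$ is surjective.

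Step one (finite image of $[\sV]\mapsto [f^*\sV]$). If $\sV\in\sS(X,r,\alpha)$, then $\alpha^*\sV$ is tame on $X'$, so $f^*\sV=\pi^*\alpha^*\sV$ is tame on $Y$, as tameness in Drinfeld's sense is preserved by arbitrary morphisms. Since $Y$ is smooth with the SNC compactification $\bar Y$, tameness is equivalent to having zero Swan conductor along $E$. Clifford's theorem, applied to the finite-index subgroup $H\subset \pi_1(X)$, implies that $\sV|_H$ is semisimple, hence so is $f^*\sV$; decompose $f^*\sV\cong \bigoplus_i \sW_i^{m_i}$ with each $\sW_i$ irreducible of rank $r_i\leq r$ and of zero Swan conductor, so $[\sW_i]\in \sS(Y,r_i,E_{\mathrm{red}})$. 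Theorem~\ref{thm:deligne} applied to $Y$ over $k'$ gives the finiteness of $\sS(Y,r',E_{\mathrm{red}})$ for each $r'\leq r$; since multiplicities are bounded by $r$, only finitely many multisets $\{([\sW_i],m_i)\}$ can arise, and so $[f^*\sV]$ takes only finitely many values modulo twist by $\Gal(\bar k/k')$-characters.

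Step two (finite fibers of $[\sV]\mapsto [f^*\sV]$). From $f^*\sV$, viewed as a representation of $\pi_1(Y)$ that factors through $\pi_1(Y)\twoheadrightarrow H$, the restriction $\sV|_H$ is determined up to twist. For any irreducible summand $\sigma$ of $\sV|_H$, Frobenius reciprocity forces $\sV$ to be a direct summand of $\mathrm{Ind}_H^{\pi_1(X)}\sigma$, a representation with at most $[\pi_1(X):H]$ irreducible summands. Hence each fiber of $[\sV]\mapsto[f^*\sV]$ is finite, and combined with Step one this proves the theorem. The main obstacle I anticipate is the careful bookkeeping around twist equivalence (on $X$ by $\Gal(\bar k/k)$-characters, on $Y$ by the potentially larger group of $\Gal(\bar k/k')$-characters); since the Mackey--Clifford bound above is uniform this does not actually disturb finiteness, but it has to be spelled out. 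A secondary subtlety is the finite-index claim for $\mathrm{im}(\pi_1(Y)\to\pi_1(X))$, which genuinely relies on the geometric unibranch hypothesis on $X$ via Stein factorization of $\pi$.
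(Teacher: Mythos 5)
Your proposal is logically admissible but it is not the paper's proof, and as written it has two genuine gaps. On the route: you black-box Theorem~\ref{thm:deligne} on the smooth alteration $Y$. The entire point of Theorem~\ref{thm:main}, stated in the abstract and introduction, is to prove finiteness \emph{without} Deligne's theorem, hence without the companions \cite{Dri12} and the number field \cite{Del12} that its proof requires. The paper replaces your appeal to Theorem~\ref{thm:deligne} by Proposition~\ref{prop:fin}: on $Y$, smooth with strict normal crossings compactification, the tame Lefschetz theorem \cite[Thm.1.1 (a)]{EKin15} produces a single curve $C$ with $\pi_1^t(C)\twoheadrightarrow\pi_1^t(Y)$, so all tame irreducibles of bounded rank restrict injectively (on classes) to $C$, where Lafforgue's theorem \cite{Laf02} gives finiteness. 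Your descent step, by contrast, is essentially the paper's: Frobenius reciprocity for the open finite-index subgroup $H\subset\pi_1(X)$ is exactly the adjunction $\sV|_U\hookrightarrow(\beta|_V)_*(\beta|_V)^*(\sV|_U)$ used in the paper, since induction along $H$ corresponds to pushforward along the finite \'etale map $\beta|_V$. So what you have done is re-derive the descent half of the argument while outsourcing precisely the half that constitutes the paper's contribution.

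There are also two concrete gaps. First, in Step one, the inference ``only finitely many multisets $\{([\sW_i],m_i)\}$ can arise, and so $[f^*\sV]$ takes only finitely many values modulo twist'' is a non sequitur: each constituent class $[\sW_i]$ is only determined up to its \emph{own} twist, so sheaves of the form $\sW_1\oplus(\sW_2\otimes\chi')$, with $\chi'$ varying over characters of ${\rm Gal}(\bar k/k')$, realize one multiset but infinitely many isomorphism classes modulo a \emph{single} overall twist. Hence the image-finiteness/fiber-finiteness structure of your argument collapses as stated. The repair is to abandon control of all of $f^*\sV$ and, as the paper does, fix \emph{one} irreducible constituent $\sigma\cong\sW\otimes\chi'$ with $\sW$ in the finite set of chosen representatives, then embed $\sV$ (up to twist) into ${\rm Ind}_H^{\pi_1(X)}(\sW)$, which has finitely many irreducible constituents. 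Second, carrying this out requires the identity ${\rm Ind}_H^{\pi_1(X)}(\sW\otimes\chi')\cong{\rm Ind}_H^{\pi_1(X)}(\sW)\otimes\chi$ for some character $\chi$ of ${\rm Gal}(\bar k/k)$, i.e.\ the fact that every continuous $\bar\Q_\ell^\times$-valued character of ${\rm Gal}(\bar k/k')$ \emph{extends} to ${\rm Gal}(\bar k/k)$ (the value on the Frobenius of $k'$ is an $\ell$-adic unit, hence admits an $m$-th root that is again an $\ell$-adic unit); this is exactly the point the paper labors over in the case $k\neq k_Y$. Your stated justification --- that ``the Mackey--Clifford bound above is uniform'' --- is not the needed fact: uniformity of the number of summands says nothing about how twists by ${\rm Gal}(\bar k/k')$-characters on $Y$ compare to twists by ${\rm Gal}(\bar k/k)$-characters on $X$. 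Lesser issues (Clifford's theorem for the non-normal $H$ via its normal core, semisimplicity and continuity of the induced representation, and the finite-index claim for ${\rm im}(\pi_1(Y)\to\pi_1(X))$, which follows from a degree count on connected covers --- where geometric unibranchness of $X'$ guarantees such covers are irreducible --- rather than from Stein factorization) are all fixable but should be spelled out.
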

 Our proof does not use the existence of the companions, nor does it use the existence of the number field. The main ingredient is the existence, on a good alteration \cite[Thm.4.1]{dJ97},  of a single curve on which all irreducible lisse tame $\bar \Q_\ell$-sheaves remain irreducible (\cite[Thm.1.1 (a)]{EKin15}). 

\medskip

We also prove  in Theorem~\ref{thm:deligne2} that in Deligne's Theorem~\ref{thm:deligne},  it is enough to assume $X$ to be normal. The proof uses the functoriality property of the notion of ramification bounded by $D$. Unlike for Theorem~\ref{thm:main}, we don't have a direct proof and have to use Theorem~\ref{thm:deligne}.

\medskip

 {\it Acknowledgements:} It is a pleasure to thank Moritz Kerz and Lars Kindler for earlier discussions, while writing \cite{EK12} and \cite{EKin15},  on topics related to this note. We thank Pierre Deligne for his comments and the questions he asked after we completed the proof of Theorem~\ref{thm:main}. 
  We thank the two referees for their careful reading and  for their friendly comments which helped us to improve the exposition of this note.

\section{Proof of Theorem~\ref{thm:main}}

By \cite[Lem.9.8]{StacksProject}, for any non-trivial open subscheme  $U\hookrightarrow X$, the homomorphism $\pi_1(U)\to \pi_1(X)$ is surjective (based at any geometric point of $U$). As  $X$ is geometrically unibranch,  any representative  $\sV$ of a class in $\sS(X,r,\alpha)$ is the isomorphism class of a representation of $\pi_1(X)$ (see introduction). Thus we may assume that $X$ is quasi-projective. 

\medskip

We fix $\alpha: X'\to X$  as in the theorem, and $\alpha': Y\to X'$ an alteration such that $\alpha'$ is  proper, generically \'etale, $Y$ is smooth and has a compactification $Y\hookrightarrow \bar Y$ which is projective, smooth, and such that $\bar Y\setminus Y$ is a strict normal crossings divisor (\cite[Thm.4.1]{dJ97}).  We denote by $k_Y$ the field of constants of $Y$. Let $\beta: Y\xrightarrow{\alpha'} X'\xrightarrow{\alpha} X$ be the composite morphism, and $U\hookrightarrow X$ be a non-trivial open of $X$ over which $\beta$ is \'etale. We set $V=\beta^{-1}(U)$. 
Again by the surjectivity $\pi_1(U)\to \pi_1(X)$, $\sV|_U$ is irreducible as well.  In addition, the restriction to $U$ induces an injective map  $\sS(X, r, \alpha)\to \sS(U, r,  \alpha|_U)$ of sets.
As $\beta|_U$ is \'etale and  the coefficients $\bar \Q_\ell$ have characteristic $0$, $\beta^{*}(\sV|_U)$ is then a sum of irreducible lisse $\bar \Q_\ell$-sheaves $\beta^{*}(\sV|_U)=\oplus_i \sV_i^U$. 

\medskip

As $\alpha^*(\sV)$ is tame, so is $\beta^*(\sV)$.  That is $\sS(X,r,\alpha)\subset \sS(X, r, \beta)$. The composite surjection $\pi_1(V)\twoheadrightarrow \pi_1(Y)\twoheadrightarrow \pi^t_1(Y)$ factors through $\pi_1^t(V) \to \pi_1^t(Y)$, which is thus surjective. Thus  for every $i$,  $\sV_i^U$ is the restriction to $V$ of a  uniquely defined lisse irreducible tame $\bar \Q_\ell$-sheaf  $\sV_i$ on $Y$ and $\beta^{*}(\sV)=\oplus_i \sV_i$.  Let $s \ge 1$ be any natural number. The restriction to $V$ induces an injective map  $\sS(Y, s, {\rm Id})\to \sS(V, s, {\rm Id})$ of sets.

\begin{prop} \label{prop:fin}
For any natural number $s\ge 1$, the set $\sS(Y, s,  {\rm Id})$ is finite.

\end{prop}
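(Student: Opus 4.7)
My plan is to reduce the finiteness on the smooth projective variety $Y$ (with strict normal crossings boundary) to the known finiteness on a curve, using the cited result \cite[Thm.1.1(a)]{EKin15} to produce a well-behaved curve, and then invoking Lafforgue's theorem (Theorem~\ref{thm:deligne} in dimension one) on that curve.

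\emph{First step.} I would apply \cite[Thm.1.1(a)]{EKin15} to produce a smooth projective curve $\bar C$ together with a morphism $j: \bar C \to \bar Y$, setting $C = j^{-1}(Y)$, with the property that for every irreducible tame rank-$s$ lisse $\bar \Q_\ell$-sheaf $\sV$ on $Y$, the pullback $j^*\sV$ remains irreducible of rank $s$ on $C$. Since $\bar Y \setminus Y$ is a strict normal crossings divisor and $\sV$ is tame along it, $j^*\sV$ is tame on $C$, hence has vanishing Swan conductor along $\bar C \setminus C$. In particular there is a single Cartier divisor $D_C$ on $\bar C$ (depending only on $\bar C$, e.g.\ $D_C = (\bar C \setminus C)_{\rm red}$), such that $j^*\sV \in \sS(C, s, D_C)$ for every $\sV \in \sS(Y, s, \mathrm{Id})$. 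This produces a well-defined map of sets
\eq{eq:restr}{j^*:\ \sS(Y, s, \mathrm{Id}) \longrightarrow \sS(C, s, D_C).}

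\emph{Second step.} By the dimension-one case of Theorem~\ref{thm:deligne}, which is Lafforgue's original theorem \cite[Prop.VII.7]{Laf02}, the target $\sS(C, s, D_C)$ of \eqref{eq:restr} is finite.

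\emph{Third step.} To conclude finiteness of $\sS(Y,s,\mathrm{Id})$ it suffices to check that $j^*$ has finite fibres. Given two irreducible tame rank-$s$ sheaves $\sV_1, \sV_2$ on $Y$ whose pullbacks become isomorphic on $C$ up to a character of $\Gal(\bar k / k_C)$, I would form the internal hom $\sV_1^\vee \otimes \sV_2$: the assumption yields a non-zero section of $j^*(\sV_1^\vee \otimes \sV_2)$ (possibly after an additional character twist). Exploiting that the curve $C$ is constructed in \cite{EKin15} so that $\pi_1(C)\to \pi_1^t(Y)$ has image large enough to preserve irreducibility of all rank-$s$ tame representations, one transfers the invariant back to $Y$: the section on $C$ extends to a non-zero homomorphism $\sV_1 \to \sV_2 \otimes \chi$ on $Y$, and irreducibility forces this to be an isomorphism. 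Up to the ambiguity of characters of $\Gal(\bar k / k)$ (and the finite index $[k_C:k]$), this shows the fibres of $j^*$ are finite.

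\emph{Main obstacle.} The delicate step is the third one: the cited result \cite[Thm.1.1(a)]{EKin15} is stated as preservation of irreducibility, whereas I need the slightly stronger conclusion that $j^*$ has finite (or even trivial) fibres modulo twist. I expect this either to be a direct consequence of the construction of $C$ in \cite{EKin15} (since that construction controls the image of $\pi_1(C)$ in a sufficiently large finite quotient of $\pi_1^t(Y)$), or to require a short additional argument applying the preservation-of-irreducibility statement to the auxiliary rank-$s^2$ sheaf $\sV_1^\vee \otimes \sV_2$ and its isotypic decomposition. Once this injectivity-up-to-finite-fibres is in hand, Proposition~\ref{prop:fin} follows immediately from Lafforgue's finiteness on $C$.
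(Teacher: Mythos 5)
Your overall strategy is the same as the paper's: restrict to a Lefschetz curve furnished by \cite[Thm.1.1 (a)]{EKin15} and invoke Lafforgue's finiteness on that curve. But the step you yourself flag as the ``main obstacle'' is a genuine gap, and it stems from under-quoting the cited theorem. \cite[Thm.1.1 (a)]{EKin15} is not merely a preservation-of-irreducibility statement: for $\bar C\hookrightarrow \bar Y$ a smooth curve that is a complete intersection of ample divisors of sufficiently high degree, transversal to $\bar Y\setminus Y$ and passing through the base point, it asserts that the homomorphism $\pi_1^t(C)\to\pi_1^t(Y)$ is \emph{surjective}, where $C=\bar C\cap Y$. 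Surjectivity settles your third step in one line: a tame lisse sheaf on $Y$ is a representation of $\pi_1^t(Y)$, and a representation precomposed with a surjective homomorphism determines the original representation up to isomorphism; hence restriction $\sS(Y,s,{\rm Id})\to\sS(C,s,{\rm Id})$ is injective outright, not merely of finite fibres. The twist ambiguity causes no trouble either: surjectivity forces the field of constants of $C$ to equal that of $Y$, and a character of the Galois group of the constant field restricted to $C$ is the restriction of its pullback to $Y$, so injectivity persists modulo twist. This is exactly the paper's proof, which then concludes by Lafforgue as you do in your second step.

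By contrast, your proposed repair of the third step does not go through with preservation of irreducibility alone. If $j^*\sV_1\cong j^*\sV_2\otimes\chi$, the sheaf $\sV_1^\vee\otimes\sV_2$ is in general not irreducible, so the irreducibility statement cannot be applied to it directly; applying it to the irreducible subquotients of $\sV_1^\vee\otimes\sV_2\otimes\chi$ only shows that each restricts irreducibly, and the nonzero section over $C$ then means some rank-one subquotient, nontrivial on $Y$, becomes trivial on $C$. Excluding that possibility is precisely an injectivity statement for rank-one tame sheaves, i.e.\ a piece of the surjectivity of $\pi_1^t(C)\to\pi_1^t(Y)$ on abelianizations --- so your fallback argument is circular unless you import the Lefschetz surjectivity anyway. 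The fix is simply to quote \cite[Thm.1.1 (a)]{EKin15} in its actual form, after which no finite-fibre analysis is needed.
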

\begin{proof}
Let $\bar C\hookrightarrow \bar Y$ be a smooth curve, complete intersection of ample divisors  of sufficiently high degree, transversal to $\bar Y\setminus Y$, containing the geometric base point at which we centered $\pi_1(Y)$.  By \cite[Thm.1.1 (a)]{EKin15}, the homomorphism $\pi_1^t(C)\to \pi_1^t(Y)$ is surjective, where $C=\bar C\cap Y$.  Thus the restriction to $C$ induces an injective map   $\sS(Y, s, {\rm Id})\to \sS(C, s,  {\rm Id})$ of sets, and $\sS(C, s, {\rm Id})$ is finite by Lafforgue's theorem (see introduction).

\end{proof}
For any natural number $s$ such that $1\le s\le r$, and every class in the finite set $\sS(Y, s, {\rm Id})$ (Proposition~\ref{prop:fin}), we fix an irreducible $\bar \Q_\ell$-lisse \'etale sheaf $\sW$ representing this class.  We denote by $\sT(Y, r, {\rm Id})$ the finite set of $\sW$ so chosen, which is in bijection with $\bigcup_{s=1}^r \sS(Y, r, {\rm id})$.  
 Further, for each $\sW \in \sT(Y, r, {\rm id})$, the $\bar \Q_\ell$-lisse sheaf $ (\beta|_V)_* (\sW|_V)$ on $U$ contains only finitely many irreducible $\bar \Q_\ell$-lisse subsheaves $\sU$.  We denote by $\sC(U, r)$
the finite collection of $\sU$ so defined. 

\medskip

For any $i$,  there is a character $\chi(\sV_i)$ of the Galois group of $k_Y$ and a $\sW (\sV_i) \in \sT(Y, r, {\rm id})$ such that $\sV_i=\sW (\sV_i)\otimes \chi(\sV_i)$. 

\medskip

We first finish the proof under the assumption $k=k_Y$. 
For any $i$,  the composite map
$$\sV|_{U} \hookrightarrow (\beta|_{V})_* (   \beta|_V  )^* (\sV|_{U}) \xrightarrow{\rm projection} (   \beta|_{V}  )_*( \sV_i^U)= (   \beta|_{V}  )_*( \sW( \sV_i)|_V) \otimes \chi(\sV_i)$$
is injective and has image $\sU\otimes \chi(\sV_i)$ for one of the $\sU$ in $\sC(U,r)$. 
Thus the image of $\sS(X,r,\beta)$ in $\sS(U, r, \beta|_V )$ lies in the subset of classes of elements in $\sC(U,r)$, thus is finite.  Thus $\sS(X,r, \alpha)\subset \sS(X,r,\beta)$ is finite as well. 
\medskip

We now treat the general case.
Let  $\beta: Y\xrightarrow{\gamma}  X\otimes k_Y  \xrightarrow{\epsilon} X$ be the factorization. 
We know by the case $k=k_Y$ that $\sS(X\otimes k_Y, s, \gamma)$ is finite for all $1\le s \le r$. The only remaining issue is to compare  the different notions of twist by a character. 

\medskip
For an irreducible $\bar \Q_\ell$-lisse sheaf $\sV$ of rank $r$ on $X$, 
 $\epsilon^*(\sV)=\oplus_{i=1}^N \sA_i$, where $\sA_i$ is  an irreducible $\bar \Q_\ell$-lisse sheaf $\sA_i$, and where the Galois group $\Z/m$  of $k_Y \cong \F_{q^m}$ over $k\cong \F_q$ 
acts  on the set $\{\sA_i, i=1,\ldots, N\}$, transitively as $\sV$ is irreducible, and  via its quotient $\Z/N$. 
Let $\sigma$ be the image of Frobenius of $k$ in  $\Z/N$, then $\epsilon^*(\sV)=\sA \oplus \sigma^*\sA \oplus  \ldots \oplus (\sigma^{N-1})^*\sA$, where $\sA$ can be chosen to be any of the $\sA_i$, and $\sV =\epsilon_* ((\sigma^i)^* \sA)$ for any $i=0, \ldots, N-1$.  In particular, $r=a\cdot N$ and $N$ divides $m$, where $a$ is the rank of $\sA$.  As the Galois group of $k$ is abelian,  the Galois group of $k_Y$ over $k$ acts trivially on the Galois group of $k_Y$, thus if $\chi$ is a character of the Galois group of $k$, $\sigma^* \epsilon^*(\chi) =\epsilon^*(\chi)$. We conclude that $\epsilon^*(\sV \otimes \chi)= \sA  \otimes  \epsilon^*(\chi) \oplus \sigma^*(  \sA  \otimes \epsilon^*(\chi) ) \oplus \ldots \oplus (\sigma^{N-1})^* (\sA \otimes  \epsilon^*(\chi)) .    $

\medskip
We now compare   $\sS(X,r, \beta)$ and $\sS(X\otimes k_Y, s, \gamma)$ for $1\le s\le r$.
The $\bar \Q_\ell$-lisse sheaf $\epsilon^*(\sV \otimes \chi)$, for a character $\chi$ of the Galois group of $k$, is uniquely determined by $\sA \otimes \epsilon^*(\chi)$.  A character of the Galois group of $k_Y$ is determined by the value of the Frobenius of $k_Y$,  which is an $\ell$-adic unit $u \in \bar \Z_\ell ^\times \subset \bar \Q_\ell^\times$, thus is a $m$-th power, thus  comes from a character of the Galois group of $k$.  
Thus $\sS(X\otimes k_Y, a, \gamma)$ is equal to  the set  of irreducible $\bar \Q_\ell$-lisse sheaves $\sA$ of rank $a$ on $X\otimes k_Y$, such that $\gamma^* \sA$ is tame, modulo twists by the pull-back via $\epsilon$ of  a  character of  the Galois group of $k$.
  Finally, if the classes of $\sV$ and $\sV'$ in $\sS(X,r,\beta)$ are such that, writing $\epsilon^*(\sV)=\oplus (\sigma^i)^*\sA, \  \epsilon^*(\sV')=\oplus (\sigma^i)^*(\sA')
$ as above, the classes of $\sA$ and of $\sA'$ in $\sS(X\otimes k_Y, a, \gamma)$ are the same, then there is a character $\chi$ of the Galois group of $k$ such that $\sA=\sA'\otimes \epsilon^*( \chi)$. This implies that   $\sV=\epsilon_* (\sA) = \epsilon_* (\sA')\otimes \chi =\sV'\otimes \chi$.  Thus the classes of $\sV$ and $\sV'$ in 
$\sS(X,r, \beta)$ are the same. As $1\le a\le r$, thus $a$ is bounded, we conclude that  $ \sS(X,r,\beta)$ is finite, and so is $\sS(X, r, \alpha)\subset \sS(X,r,\beta)$. This finishes the proof.

\section{Remarks and Comments} \label{sec:comments}
\subsection{Skeleton sheaves}
In order to prove Theorem~\ref{thm:deligne}, Deligne introduces the notion of what we called $\bar \Q_{\ell}$-lisse $2$-skeleton sheaves in \cite[2.2]{EK12}. Those are collections $\{\sV_C\}_C$ of $\bar \Q_\ell$-\'etale sheaves $\sV_C$ for any smooth curve $C$ mapping to $X$, together with gluing conditions. On $C\times_XC'$, with projections $p_C, p_{C'}$ to $C$ and $C'$, one has an isomorphism $p_C^*V_C\cong p_{C'}^*V_{C'}$ which satisfies the cocyle condition (the definition is expressed slightly differently in {\it loc.cit.} and is trivially equivalent to this one).  An irreducible $2$-skeleton sheaf is one which does not contain any non-trivial sub ({\it loc.cit.}).  With the assumptions as in Definition~\ref{defn:alpha}, for $\alpha: X'\to X$ finite \'etale, one defines $\sS k(X,r,\alpha)$ to be the set of  rank $r$ irreducible $\bar \Q_\ell$-lisse  $2$-skeleton sheaves, with ramification  bounded by $\alpha$,  modulo twist by a character of the Galois group of $k$.  The boundedness by $\alpha$ means that for any smooth curve $C$ mapping to $X'$, $\sV_C$ is tame. 
 With the assumptions as in Definition~\ref{defn:D}, one defines $\sS k(X,r, D)$ to be the set of
 rank $r$  irreducible $\bar \Q_\ell$-lisse $2$-skeleton sheaves,  with  ramification  bounded by $D$, modulo twist by a character of the Galois group of $k$.   The boundedness by $D$ means that for any smooth curve $C$ mapping to $X$,  with normal compactification $\bar C$,  $\sV_C$  has ramification bounded by $\bar C\times_{\bar X} D$. Pull-back to curves induces maps of sets $\sS(X,r, \alpha) \to \sS k(X,r,\alpha)$ and $\sS(X,r, D)\to \sS k(X,r,D)$,  which are injective ({\it loc. cit.}).  Furthermore, the proof of \cite[Prop.~3.9]{EK12} shows that for $\Delta$ as in \eqref{1}, one has $\sS k(X,r, \alpha)\subset \sS k(X,r, \Delta). $ The proof of Theorem~\ref{thm:main} works word by word to show that under the assumptions of the theorem, $\sS k(X,r, \alpha)$ is finite.

\subsection{Two ways of bounding the ramification} \label{ss:ram}
As already mentioned,  under the assumptions of \eqref{1},
given an effective Cartier divisor $D$ with support $\bar X\setminus X$,  with $X$ smooth of finite type over a characteristic $p>0$ field $k$, we do not know whether there is a finite \'etale  morphism $\alpha: X'\to X$  such that $\sS(X,r, D) \subset \sS(X,r, \alpha)$. 

\medskip

If $k$ is finite, then, using Deligne's Theorem \ref{thm:deligne}, the answer is positive. One takes an $\alpha$ which makes   the direct sum representation 
$\oplus_{[\sV]\in \sS(X,r,D)  } \sV$ tame, where for each class $[\sV]$ in $\sS(X,r,D)$, $\sV$ is the choice of a representative. 

\medskip 
However, the question can be posed geometrically, that is assuming $k$ to be algebraically closed. 
In general, Deligne \cite{Del16} raises the following question. Given $X$ smooth of finite type over an algebraically closed field $k$, with a normal compactification $X\hookrightarrow \bar X$, given  an effective Cartier divisor $D$ with support $\bar X\setminus X$,  and a natural number $r$, does there exist a smooth curve $C\hookrightarrow X$ such that if 
$\sV$ is a representative of a class in $\sS(X,r,D)$, 
 its restriction $\sV_C$ to $C$ is still irreducible?  One can ask an even more optimistic question, dropping the boundedness of the rank $r$.
 Can one expect  the Lefschetz theorem \cite[Thm.1.1 (a)]{EKin15} to be true after replacing the tame fundamental group $\pi_1^t(X)$  by the Tannaka group of all $\bar \Q_{\ell}$-lisse   \'etale sheaves with ramification bounded by $D$?  

\medskip

For $X\hookrightarrow \bar X$ a good compactification,  there is a positive answer for the abelian quotient of the fundamental group in characteristic $\ge 3$  (see \cite[Thm.~1.1]{KS15}). 

\medskip

Deligne raises also in {\it loc. cit.} a sub-question. Consider all the finite Galois  \'etale covers $\alpha: X'\to X$  which make rank $r$  irreducible $\bar \Q_\ell$-lisse sheaf $\sV$ on $X$ with ramification bounded by $D$ tame.  For each of them, let  $\bar X'\to \bar X$ be the normalisation of $\bar X$ in the field of functions of $X'$.  Can one bound the inseparable  degree of the induced finite covers of the components of $\bar X\setminus X$?

\subsection{Smooth versus normal.}
The set $\sS(X,r,D)$ is defined under the assumption that $X$ is normal, in particular geometrically unibranch.  The spirit of the proof of Theorem~\ref{thm:main} enables us to generalize Deligne's finiteness theorem~\ref{thm:deligne}, from $X$ smooth to $X$ normal. Unfortunately, contrary to  Theorem~\ref{thm:main},  one has to use Theorem~\ref{thm:deligne}, so it does not  really shed a new light on it.

\begin{thm} \label{thm:deligne2}

Let $X$ be  a geometrically connected normal scheme of finite type defined over a finite field, $X\hookrightarrow \bar X$ be a normal compactification, $D$ be a Cartier divisor with support $\bar X\setminus X$, $r$ be a natural number. Then  $\sS(X,r,D)$ is finite.

\end{thm}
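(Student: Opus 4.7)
\textbf{The plan} is to run a strategy parallel to the proof of Theorem~\ref{thm:main}, but instead of invoking Lafforgue's theorem on a curve after a Lefschetz-type descent, one applies Deligne's Theorem~\ref{thm:deligne} itself as a black box on a smooth alteration of $X$. The key extra ingredient is the functoriality of the condition ``ramification bounded by $D$'' under pullback by proper morphisms of normal schemes extending to the given normal compactifications.

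More precisely, by \cite[Thm.4.1]{dJ97}, I would choose a proper generically étale alteration $\beta: Y \to X$ with $Y$ smooth and geometrically connected over its field of constants $k_Y$, together with an extension $\bar\beta: \bar Y \to \bar X$ to a smooth projective compactification such that $\bar Y \setminus Y$ is a strict normal crossings divisor. Set $D_Y := \bar\beta^* D$, an effective Cartier divisor on $\bar Y$ supported on $\bar Y \setminus Y$. For any smooth curve $\bar C \to \bar Y$ with $C := \bar C \cap Y$ and composite map $\bar C \to \bar X$ one has $\bar C \times_{\bar X} D = \bar C \times_{\bar Y} D_Y$, so the pullback $\beta^*\sV$ of any $\sV$ with ramification bounded by $D$ has ramification bounded by $D_Y$. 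Choose a non-empty open $U \subset X$ over which $\beta$ is finite étale and set $V = \beta^{-1}(U)$. Decomposing $\beta^*(\sV)|_V = \oplus_i \sV_i^U$ and extending each summand to an irreducible lisse sheaf $\sV_i$ on $Y$ via the surjection $\pi_1(V) \twoheadrightarrow \pi_1(Y)$ (which uses normality of $Y$), one obtains $\beta^*\sV = \oplus_i \sV_i$ with each $\sV_i$ of rank $\le r$ and, by additivity of Swan conductors, with ramification bounded by $D_Y$.

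Deligne's Theorem~\ref{thm:deligne} applied on the smooth $Y$ now yields that $\sS(Y, s, D_Y)$ is finite for every $1 \le s \le r$. Fixing a finite family $\sT(Y, r, D_Y)$ of representatives for $\bigcup_{s=1}^r \sS(Y,s, D_Y)$, as in the last two paragraphs of the proof of Theorem~\ref{thm:main} the irreducible sheaf $\sV|_U$ embeds in a twist of some $(\beta|_V)_*(\sW|_V)$ with $\sW \in \sT(Y,r,D_Y)$, and hence it lies in a finite collection $\sC(U,r)$ of candidates up to twist by characters of $\Gal(k_Y)$. Comparing twists by $\Gal(k_Y)$ with twists by $\Gal(k)$ via the factorization $\beta = \epsilon \circ \gamma$ with $\gamma: Y \to X \otimes k_Y$, $\epsilon: X \otimes k_Y \to X$, and using that $\ell$-adic units are $m$-th powers (so every character of $\Gal(k_Y)$ is the pullback of one of $\Gal(k)$), I conclude that $\sV|_U$ varies in a finite set modulo twist by characters of $\Gal(k)$. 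The restriction $\sS(X,r,D) \to \sS(U,r,D)$ being injective by surjectivity of $\pi_1(U) \twoheadrightarrow \pi_1(X)$, the finiteness of $\sS(X,r,D)$ follows.

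\textbf{The main obstacle} is the functoriality step: verifying that if $\sV$ has ramification bounded by $D$ on $\bar X$, then $\beta^*\sV$ has ramification bounded by the specific divisor $\bar\beta^* D$ on $\bar Y$. This functoriality is built into the curve-based Definition~\ref{defn:D} and the compatibility of fiber products of divisors with the composition of curves, which is the geometric reason the argument succeeds for a divisor bound $D$ and not merely for tameness. Everything else, including the twist comparison, is a direct transcription of the corresponding steps in the proof of Theorem~\ref{thm:main}.
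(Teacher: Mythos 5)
Your proof is correct in substance, but it takes a genuinely different route from the paper's. You go \emph{up} to a smooth alteration: de Jong's theorem applied to $(\bar X,\ \bar X\setminus X)$ gives $\bar\beta:\bar Y\to\bar X$ with $\bar Y$ smooth projective and snc boundary; the pullback $\bar\beta^*D$ bounds the ramification of $\beta^*\sV$ (your functoriality step, which is indeed the crux and is correct, since $\bar h^*\bar\beta^*D=(\bar\beta\circ\bar h)^*D$ for any curve $\bar h:\bar C\to\bar Y$); Deligne's Theorem~\ref{thm:deligne} is applied on $Y$ over $k_Y$; and you then descend back to $X$ via the adjunction/pushforward argument and the $\Gal(k_Y)$-versus-$\Gal(k)$ twist comparison, exactly as in the proof of Theorem~\ref{thm:main}. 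The paper instead goes \emph{down} to the smooth locus $U\subset X$: since $U$ is already smooth, no alteration is needed; the only obstruction to quoting Theorem~\ref{thm:deligne} on $U$ is that $\bar X\setminus U$ need not support a Cartier divisor, and this is fixed by taking $\bar f:\bar Y\to \bar X$ to be the normalized blow-up of the ideal of $\bar X\setminus U$, with exceptional Cartier divisor $E$, and setting $D'=E+\bar f^*D$; then restriction gives an injection $\sS(X,r,D)\hookrightarrow \sS(U,r,D')$ by surjectivity of $\pi_1(U)\to\pi_1(X)$, and Theorem~\ref{thm:deligne} applied to $(U,\bar Y, D')$ finishes. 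The paper's route is much shorter: $U$ has the same field of constants as $X$, so the semisimplification, the finitely-many-subsheaves bookkeeping, and the twist comparison all disappear; the only input beyond Theorem~\ref{thm:deligne} is the same functoriality of $D$-bounded ramification that you isolate. Your route buys uniformity with the proof of Theorem~\ref{thm:main} and needs Deligne's theorem only on a smooth $Y$ with smooth projective snc compactification (rather than on $U$ with the possibly less pleasant normal compactification $\bar Y$), at the cost of redoing all the descent steps.

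Two small points to repair in your write-up. First, the set $\sS(U,r,D)$ in your last sentence is not well-defined: $D$ is not supported on the boundary of any compactification of $U$, and $\bar X\setminus U$ may fail to support a Cartier divisor at all --- this is precisely the difficulty the paper's blow-up resolves. Your argument does not actually need such a set: you only need that $[\sV]\mapsto[\sV|_U]$ is injective into isomorphism classes of irreducible lisse sheaves on $U$ modulo twist by characters of $\Gal(k)$ (which follows from surjectivity of $\pi_1(U)\to\pi_1(X)$), and that its image lies in the finite set of classes of elements of $\sC(U,r)$, which your descent establishes; state it that way. Second, ``finitely many irreducible lisse subsheaves'' of $(\beta|_V)_*(\sW|_V)$ should be read as finitely many isomorphism classes (bounded by the Jordan--H\"older constituents); this is the same harmless abuse as in the paper's own proof of Theorem~\ref{thm:main}.
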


\begin{proof}
Let $j: U\hookrightarrow X$ be the smooth locus, 
let $\sI$ be the ideal sheaf defining the complement in $\bar X$ of the open embedding $U\hookrightarrow X \hookrightarrow \bar X$.   Let $\bar f:\bar Y\to \bar X$ be the normalization of the blow up of $\sI$, with restriction $f: Y=\bar f^{-1}(X)\to X$ to $X$.  The open embedding $j$ lifts to  the open embedding $j': U\to Y$.  
 Let $\sO_{\bar Y}(-E)=\bar f^*\sI$ be the locally free ideal sheaf  of the exceptional locus, and define the effective Cartier divisor $D'=E+\bar f^*D$ on $\bar Y$. It has support $\bar Y\setminus U$. 
\medskip

As $X$ is normal, thus  geometrically unibranch,  a representative $\sV$ of a class in $\sS(X,r,D)$  corresponds to a continuous representation $\rho: \pi_1(X)\to GL(r, \bar \Q_\ell)$ (\cite[Intro., Lem.7.4.10]{BS15}). If $h: C\to Y$ is a morphism from a smooth curve, with unique extension $\bar h: \bar C\to \bar Y$, where $\bar C$ is the normal compactification,  $(f\circ h)^*\sV$ has ramification bounded by $\bar h^* \bar f^*D$, thus,  a fortiori, $\sV|_U=f^*\sV|_U$ has ramification bounded by $D'$. 
As $\pi_1(U)\to \pi_1(X)$ is surjective \cite[Lem.9.8]{StacksProject}, restriction to $U$ induces an injective map $\sS(X,r,D)\to \sS(U, r, D')$. As  $\sS(U, r, D')$ is finite by Theorem~\ref{thm:deligne},  so is $\sS(X, r, D)$.

\end{proof}

\end{document}